\documentclass[12pt]{amsart}

\usepackage{euler, epic,eepic,latexsym, amssymb, amscd, amsfonts, xypic, url, color, epsfig}

\input xy
\xyoption{all}

% Picture stuff
 
 \newlength{\baseunit}               % the basic unit length
         % width of the picture
          % depth of the picture
                % with between left margin and picture
 \newcount{\numlines}                % depth of picture (in number of lines)
 \setlength{\baseunit}{0.05ex}
  
\setlength{\oddsidemargin}{0cm} \setlength{\evensidemargin}{0cm}
\setlength{\marginparwidth}{0in}
\setlength{\marginparsep}{0in}
\setlength{\marginparpush}{0in}
\setlength{\topmargin}{0in}
\setlength{\headheight}{0pt}
\setlength{\headsep}{0pt}
\setlength{\footskip}{.3in}
\setlength{\textheight}{9.2in}
\setlength{\textwidth}{6.5in}
\setlength{\parskip}{4pt}

\theoremstyle{plain}% default

\newtheorem*{tmnl}{Main Theorem}

\newtheorem{tm}{Theorem}

\newtheorem{lm}[tm]{Lemma}

\theoremstyle{definition}
\newtheorem{df}[tm]{Definition}

\newtheorem{property}{Property}

\theoremstyle{remark}
\newtheorem{rmk}[tm]{Remark}

%%Boldface
\newcommand{\bbA}{\mathbf{A}}

\newcommand{\bbP}{\mathbf{P}}

%%Caligriphy

\newcommand{\calO}{{ \mathcal O}}

\newcommand{\Spec}{ {\operatorname{Spec}\;}}

\begin{document}

\pagestyle{plain}
\title{The compactified jacobian can be nonreduced}

\author{Jesse Leo Kass}
\address[Current]{Dept.~of Mathematics, University of South Carolina, 1523 Greene Street, Columbia~SC 29208}
\address[Former]{Institut f\"{u}r algebraische Geometrie, Leibniz Universit\"{a}t Hannover, Welfengarten 1, 30060 Hannover, Germany}
\email{kassj@math.sc.edu}

\date{\today}
\subjclass[2010]{Primary 14H40, Secondary 14C05. }
\begin{abstract}
We prove by explicit example that the compactified  jacobian can be nonreduced.  The example is a rational space curve of arithmetic genus $4$.  This answers a question posed by Cyril D'Souza in 1979.
\end{abstract}
\maketitle

{\parskip=12pt % closing bracket is just before the bibliography

We prove that the  compactified jacobian, or moduli space of fixed degree rank $1$, torsion-free sheaves, can be nonreduced.   D'Souza and Altman--Kleiman independently proved that the compactified jacobian $\overline{J}(X)$ of a reduced and irreducible curve $X$ exists as a projective scheme \cite[Theorem~II.4.1]{dsouza79}, \cite[Theorem~8.1]{altman80}. On \cite[page~423]{dsouza79}, D'Souza asked a natural follow-up question: Does $\overline{J}(X)$ have good properties?  Is $\overline{J}(X)$ irreducible?
Is $\overline{J}(X)$ reduced?    

D'Souza wrote his paper in 1974, but it was not published until 1979.  In the intervening 5 years, his irreducibility question was answered. Altman--Iarrobino--Kleiman and  Rego independently proved that $\overline{J}(X)$ is irreducible when the singularities of $X$ are planar:
\begin{description}
	\item[(1)]   If the singularities of $X$ are planar, then $\overline{J}(X)$ is a reduced and irreducible variety with local complete intersection singularities (\cite[Theorem~(9)]{altman77}; see also \cite[Theorem~A]{rego} for irreducibility and \cite[Proposition~1.4]{granger} for an analogous result for the Hilbert scheme).

\end{description}
By contrast, when $X$ has non-planar singularities, Kleiman--Kleppe and Rego independently showed that $\overline{J}(X)$ is reducible:

\begin{description}
	\item[(2)] if $X$ has a non-planar singularity, then $\overline{J}(X)$ is reducible (\cite[Theorem~A]{rego}, \cite[Theorem~(1)]{Kleppe81}; see also \cite[Theorem~A]{kass12} for $X$ non-Gorenstein).
\end{description}
Altman--Iarrobino--Kleiman also proved that $\overline{J}(X)$ is connected \cite[Proposition~(11)]{altman77}.

Results (1) and (2) completely answer D'Souza's question about when $\overline{J}(X)$ is irreducible.  Here we answer D'Souza's question about when $\overline{J}(X)$ is reduced.

\begin{tmnl} \label{Theorem: MainTheorem}
	The compactified jacobian $\overline{J}(X)$ can be nonreduced.  
\end{tmnl}

This statement is proven as Theorem~\ref{Thm: MainTheorem} below.  That last theorem in fact establishes that $\overline{J}(X)$ can be arbitrarily nonreduced in the following sense.  Given an integer $b_0>1$, we construct a curve $X$ such that there is a rational function $f$ on $\overline{J}(X)$ satisfying 
\begin{equation}  \label{Eqn: SpecialProperty}
	f^{b_0+1}=0 \text{ but } f^{b_0} \ne 0.
\end{equation}
When $b_0=1$, $X$ can be taken to be a rational space curve of  arithmetic genus $4$.

The idea behind the construction of  $X$ is the following:  An argument with the Abel map shows that $\overline{J}(X)$ is nonreduced provided the first Quot scheme $X^{[1]}$ is nonreduced.  (The Quot scheme $X^{[1]}$  parameterizes length $1$ quotients of the dualizing sheaf $\omega$.)  When $X$ is Gorenstein, $X^{[1]}$ is isomorphic to $X$ (and hence reduced).  The geometry of $X^{[1]}$ is more interesting when $X$ is non-Gorenstein, especially when $X$ is a non-Gorenstein space curve of local Cohen--Macaulay type $2$.  By  the Hilbert--Burch theorem, such an $X$ is locally the zero locus of the minors  of a $3$-by-$2$ matrix $A \in \operatorname{Mat}_{3,2}(k[x,y,z])$, and a computation shows that $X^{[1]}$ is locally the complete intersection defined by the rows of $A$.  The curve in the Main Theorem is constructed by choosing an $A$ (the matrix in Equation~\eqref{Eqn: HBMatrix} below) so that the rows define a nonreduced curve.  Nonreducedness is not automatic: For example, when $X$ is a rational curve with a unique singularity analytically equivalent to the axes in $3$-space (the singularity $A_1 \vee L$ in the notation of \cite[Table~4]{fruhbiskruger}), a computation shows that $X^{[1]}$ is a nodal curve (and more generally if the singularities of $X$ are of finite representation type in the sense of \cite[Section~4]{kass12}, then $X^{[1]}$ is reduced).  Nevertheless, the author expects that further examples of matrices defining nonreduced curves can be constructed without difficulty.  The  matrix used in this paper was chosen so that the nonreduced structure of $X^{[1]}$ is particularly transparent.

The compactified jacobian is an example of a moduli space of stable sheaves $\operatorname{M}(V)$ on a reduced and irreducible variety $V$.  How do the results of this paper compare with past work on $\operatorname{M}(V)$?  The  author is aware of three different bodies of work.  To the author's knowledge, Igusa constructed the first example of a nonreduced moduli space of stable sheaves in \cite{igusa}.  Igusa constructed a smooth projective surface $V$ in characteristic $2$, the quotient of an abelian surface by $\mathbb{Z}/2$, with the property that the moduli space of line bundles $\operatorname{M}(V)$, a moduli space of stable sheaves, is nonreduced.  Additional  smooth projective varieties with the property that the moduli space $\operatorname{M}(V)$ of line bundles is nonreduced have been constructed by  Serre \cite[Proposition~15]{serre58}, Bombieri--Mumford \cite[Theorem~2]{bombieri}, Raynaud \cite[4.2.3]{raynaud}, Suh \cite[Theorem~1.2.1, Theorem~3.4]{suh},  and Liedtke \cite{liedtke}. Their constructions are all constructions in positive characteristic.  Similar examples cannot exist in characteristic zero because in characteristic zero a moduli space of line bundles is reduced.

In characteristic zero the first examples of a nonreduced $\operatorname{M}(V)$ are moduli spaces of stable rank $2$ vector bundles  on a smooth projective surface.  These moduli spaces and their nonreduced  structures have been intensively studied by mathematicians working on differentiable $4$-manifolds because of a relation to the Donaldson invariant. In particular, Okonek and Van de Ven have proven that, for a suitable choice of polarization, the moduli space $\operatorname{M}(V)$ of stable rank $2$ vector bundles with Chern class $(c_1,c_2)=(0,1)$ is nonreduced when $V$ is a non-singular minimal elliptic surface of geometric genus $p_{a}=0$ that contains two multiple fibers with multiplicities $3$ and $5$ respectively  \cite[Theorem~2.1 and Page~14, Remark]{okonek}.  Other results along these lines are given by Friedman--Morgan \cite[Lemma~4.5, Theorem~4.6]{friedman}, Bauer \cite[(3.1)~Theorem]{bauer92}, Kotschick \cite[Corollary~(2.9), Corollary~(4.17), Proposition~(5.8)]{kotschick}, and Bauer \cite[Section~V. Multiplicities]{bauer94}.

Vakil has proven stronger results about the nonreducedness of  $\operatorname{M}(V)$ when $V=\bbP^{4}$ is projective $4$-space.  He proved that the moduli space of stable sheaves $\operatorname{M}(\bbP^{4})$ can be arbitrarily nonreduced (in fact arbitrarily singular) in a sense that he makes precise  \cite[1.1.~Main Theorem]{vakil06}.  Payne has extended Vakil's result to a certain moduli space of toric vector bundles  \cite[Theorem~4.1] {payne}, although that moduli space is not a moduli space of stable sheaves.

\subsubsection*{Conventions}
$k$ is an algebraically closed field.  A \emph{curve} is a reduced and irreducible proper $k$-scheme of dimension $1$.  We write $\omega$ for the \emph{dualizing sheaf} of $X$.  For a  $k$-scheme $T$, we write $\omega_{T}$ for the pullback of $\omega$ by the projection $X \times_{k} T \to X$. The degree $d$ \emph{compactified jacobian} $\overline{J}^{d}(X)$ is the moduli space of rank $1$, torsion-free sheaves of degree $d$.  We abuse notation and write $\overline{J}(X)$ for $\overline{J}^{d}(X)$ when the degree $d$ is clear from context.  The degree $d$ \emph{Quot scheme} $X^{[d]}:=\operatorname{Quot}_{\omega}^{d}(X)$ is the parameter space of surjections from the dualizing sheaf $\omega$ to an $\calO_{X}$-module of length $d$.  The \emph{Abel map} is the morphism $X^{[d]} \to \overline{J}^{2g-2-d}(X)$ defined by sending a surjection to its kernel.  For the precise definitions, see  \cite[Definition~5.11, Theorem~8.1]{altman80} for $\overline{J}(X)$,  \cite[Definition~2.5, Theorem~2.6]{altman80} for $X^{[d]}$, and \cite[(5.16), (8.2)]{altman80} for the Abel map.

\section{The Example}
We prove Theorem~A by explicitly constructing a curve $X$ with the property that the projectivization $\bbP \omega$ of the dualizing sheaf is nonreduced and then deducing the nonreducedness of $\overline{J}(X)$ using an Abel map.  In order to prove that $\overline{J}(X)$ is not only nonreduced but in fact admits a function satisfying Equation~\eqref{Eqn: SpecialProperty}, we also prove a technical lemma about the behavior of nilpotence.  The reader interested only in proving  that $\overline{J}(X)$ is nonreduced, and not the stronger result that Equation~\eqref{Eqn: SpecialProperty} holds, can ignore Property~\ref{Eqn: SpecialPropertyB} and  Lemmas~\ref{Lemma: NonVanishing}, \ref{Lemma: RightOrder} below and replace the citation of Lemma~\ref{Lemma: RightOrder} in the proof of Theorem~\ref{Thm: MainTheorem} with a citation of the fact that noreducedness descends down smooth morphisms.

We now fix an integer $b_0 > 0$ and construct $X$.

\begin{df}
Define $X \subset \bbP^3_{k}$ to be the image of the morphism $\bbP^1_{k} \to \bbP^3_{k}$ defined in projective coordinates by 
\begin{equation} \label{Eqn: Parameterize}
	[S,T] \mapsto [S^{3 b_{0}+5}, S^{3 b_{0}+2} T^{3}, T^{3b_{0}+5}, S T^{3 b_{0}+4}].
\end{equation}
\end{df}
This definition is chosen so that the structure sheaf $\calO_{X}$ has a particular free resolution, the resolution \eqref{Eqn: Resolution} appearing below in the proof of Lemma~\ref{Lemma: MainLemma}.  

\begin{rmk}
The scheme $X$ is a curve of arithmetic genus $2 b_0+2$.  Indeed, by construction the normalization $\widetilde{X}$ is a rational curve, and the quotient $\calO_{\widetilde{X}}/\calO_{X}$ is supported at the singularity  $x_0 := [1,0,0,0]$ and has length $\delta(x_0)=2 b_0+2$.  In particular, when $b_0=1$, $X$ has arithmetic genus $4$.
\end{rmk}

We will show that the compactified jacobian of $X$ is nonreduced and in fact satisfies a slightly stronger condition that we now introduce.
\begin{property} \label{Eqn: SpecialPropertyB}
	We say that a $k$-scheme $V$ satisfies \textbf{Property~\ref{Eqn: SpecialPropertyB}} if there exists a nonempty open subset $W \subset V$ and a regular function $f \in H^{0}(W, \calO_{V})$ such that $f^{b_0+1}=0$ but the support of $f^{b_0}$ is $W$ (so in particular $f^{b_0} \ne 0$).
\end{property}

\begin{lm} \label{Lemma: MainLemma}
	The projectivization $ \bbP \omega := \operatorname{Proj} \calO_{X}[\omega]$ of the dualizing sheaf $\omega$ satisfies Property~\ref{Eqn: SpecialPropertyB}. Furthermore, the nonreduced open subscheme $W \subset \bbP \omega$ can be taken to be contained in the fiber $\beta^{-1}(x_0)$ of the structure morphism $\beta \colon \bbP \omega \to X$ over the singularity $x_0 \in X$.
\end{lm}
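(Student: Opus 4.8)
The plan is to reduce the whole statement to one explicit local computation near the singular point $x_0$, using the Hilbert--Burch description of $\calO_X$ at $x_0$ together with the fact that, at a point where $\omega$ needs two generators, $\bbP\omega$ is cut out by the rows of the Hilbert--Burch matrix. First I would work in the affine chart $\Spec k[u,v,w]\subset\bbP^{3}$ where the first coordinate is nonzero. Substituting $t=T/S$ into \eqref{Eqn: Parameterize} identifies $\calO_{X,x_0}$ with the numerical semigroup ring $k[t^{3},t^{3b_0+4},t^{3b_0+5}]\subset k[t]$, say via $u\mapsto t^{3}$, $w\mapsto t^{3b_0+4}$, $v\mapsto t^{3b_0+5}$; reading off the relations among these monomials gives the ideal of $X$ at $x_0$ as
\[
	I_X=\bigl(w^{2}-u^{b_0+1}v,\ v^{2}-u^{b_0+2}w,\ vw-u^{2b_0+3}\bigr).
\]
For $b_0>0$ these three binomials are a minimal generating set (their initial forms $w^{2},v^{2},vw$ are independent), and since $\calO_{X,x_0}$ is Cohen--Macaulay of codimension $2$, the Hilbert--Burch theorem produces a minimal free resolution $0\to k[u,v,w]^{2}\xrightarrow{A}k[u,v,w]^{3}\to k[u,v,w]\to\calO_{X,x_0}\to 0$ with $I_X=I_2(A)$; I would check one may take (this is, up to a choice of coordinates, the matrix~\eqref{Eqn: HBMatrix})
\[
	A=\begin{pmatrix} v & u^{b_0+2}\\ -w & -v\\ u^{b_0+1} & w\end{pmatrix}.
\]

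Next I would dualize this resolution into $k[u,v,w]$, which presents $\omega_{X,x_0}=\operatorname{Ext}^{2}_{k[u,v,w]}(\calO_X,k[u,v,w])$ as the cokernel of $A^{T}$: so $\omega$ is locally generated by two sections $e_1,e_2$ subject to the three relations read off from the rows of $A$. Consequently the symmetric algebra $\calO_X[\omega]=\operatorname{Sym}_{\calO_X}\omega$ is, locally at $x_0$, the algebra $k[u,v,w][E_1,E_2]$ modulo the three bilinear forms $vE_1+u^{b_0+2}E_2$, $wE_1+vE_2$, $u^{b_0+1}E_1+wE_2$, and $\bbP\omega$ is its $\operatorname{Proj}$. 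In the chart $E_2=1$, writing $\lambda=E_1$, the last two forms give $w=-u^{b_0+1}\lambda$ and $v=u^{b_0+1}\lambda^{2}$, and plugging these into the first collapses everything to the single equation
\[
	u^{b_0+1}\bigl(\lambda^{3}+u\bigr)=0 .
\]
So near $\beta^{-1}(x_0)$ the scheme $\bbP\omega$ is the plane curve $\Spec k[u,\lambda]/\bigl(u^{b_0+1}(\lambda^{3}+u)\bigr)$, with an analogous picture in the chart $E_1=1$: it is the union of the $(b_0+1)$-fold line $\{u=0\}$, whose reduction is the fibre $\beta^{-1}(x_0)\cong\bbP^{1}$, and of the reduced ``main component'' $\{\lambda^{3}+u=0\}=\overline{\beta^{-1}(X\setminus x_0)}$, the two meeting only in the single point $[E_1:E_2]=[0:1]$.

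Finally I would take $W\subset\bbP\omega$ to be the open complement of the main component. Set-theoretically $W=\beta^{-1}(x_0)\setminus\{[0:1]\}\cong\bbA^{1}$, so $\beta$ carries $W$ to $x_0$; and on $W$ the factor $\lambda^{3}+u$ is invertible, so there $W=\Spec\bigl(k[u,\lambda]_{\lambda^{3}+u}/(u^{b_0+1})\bigr)$ (and similarly in the other chart). Setting $f:=u|_{W}\in H^{0}(W,\calO_{\bbP\omega})$, one has $f^{b_0+1}=0$ on $W$, while $u^{b_0}\notin(u^{b_0+1})$ even after inverting the nonzerodivisor $\lambda^{3}+u$, so $f^{b_0}\neq0$; and since $W$ is irreducible and $f^{b_0}$ is nonzero at its generic point, $\operatorname{Supp}f^{b_0}=W$. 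This gives Property~\ref{Eqn: SpecialPropertyB} with $W$ contained in $\beta^{-1}(x_0)$, as asserted.

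The hard part will be the two middle steps: pinning down the Hilbert--Burch matrix $A$ of this particular curve, and then carrying out the elimination in the $\operatorname{Proj}$ chart so that the resulting plane-curve equation factors exactly as $u^{b_0+1}$ times a polynomial coprime to $u$. It is precisely this exponent $b_0+1$ that makes the parametrization~\eqref{Eqn: Parameterize} do its job, so the point requiring care is not merely that $\bbP\omega$ is nonreduced along $\beta^{-1}(x_0)$, but that its generic multiplicity there is exactly $b_0+1$.
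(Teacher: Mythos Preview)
Your proposal is correct and follows essentially the same route as the paper: pass to the affine chart containing $x_0$, identify the ideal of $X$ via the semigroup ring, write down the Hilbert--Burch matrix, dualize to present $\omega$, and read off a chart of $\bbP\omega$ as $\Spec k[u,\lambda]/(u^{b_0+1}(\lambda^3+u))$, then invert the reduced factor. The only differences from the paper are cosmetic---variable names, a row/column-equivalent form of the matrix~\eqref{Eqn: HBMatrix}, and working in the chart $E_2=1$ rather than $E_1=1$---and your slightly larger choice of $W$ (the full complement of the main component rather than a basic open in one chart), which is harmless.
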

\begin{proof}
We prove the lemma by using a free resolution of $\calO_{X}$ to  compute $\bbP \omega$.  Let $X_1 \subset X$ be the open affine that is complement of the hyperplane $\{ [W,X,Y,Z] \colon W=0 \} \subset \bbP^{3}_{k}$, so $X_1 = \Spec(k[t^{3}, t^{3 b_0 + 5}, t^{3 b_0+4}])$.  The author claims $X_1$ is isomorphic to the subscheme of $\bbA^{3}_{k}$ defined by the maximal minors $m_1$, $m_2$, and $m_3$ of the matrix 
\begin{equation} \label{Eqn: HBMatrix}
	A := \begin{pmatrix}
			z		&	x^{b_0+1} 	\\
			y			&	z			\\
			x^{b_0+2}	&	y
		\end{pmatrix} \in \operatorname{Mat}_{3, 2}(k[x, y, z]).
\end{equation}
To verify the claim, observe that the homomorphism 
\begin{equation} \label{Eqn: Normalize}
	k[x,y,z]/(m_1, m_2, m_3) \to k[t^{3},  t^{3 b_0 +5}, t^{3 b_0 + 4}]
\end{equation}
	 defined by $x \mapsto t^3$, $y \mapsto t^{3 b_0 + 5}$, $z \mapsto t^{3 b_0 + 4}$ is a surjection that preserves a natural grading.  A computation shows that the graded pieces of both the source and the target of the homomorphism are $1$-dimensional, so surjectivity implies injectivity, and we have verified the claim.

 One consequence of the claim is that $\calO_{\bbA_{k}^3}/(m_1, m_2, m_3)$ is a Cohen--Macaulay curve, so if $B := \begin{pmatrix} m_1 & -m_2 & m_3 \end{pmatrix} \in \operatorname{Mat}_{1,3}(k[x,y,z])$, then  the sequence 
\begin{equation} \label{Eqn: Resolution}
	0 \to \begin{matrix} \calO_{\bbA_{k}^{3}} \\ \oplus \\ \calO_{\bbA_{k}^{3}} \end{matrix} \stackrel{A}{\longrightarrow} \begin{matrix} \calO_{\bbA_{k}^{3}} \\ \oplus \\ \calO_{\bbA_{k}^{3}} \\ \oplus \\ \calO_{\bbA_{k}^{3}}  \end{matrix} \stackrel{B}{\longrightarrow} \calO_{\bbA_{k}^{3}} \to \calO_{X_1} \to 0
\end{equation}
is an exact sequence by the Hilbert--Burch theorem \cite[Theorem~3.2]{eisenbud05}.

Using this sequence to compute the dualizing module $\omega_{X_1} = \operatorname{Ext}^{2}(\calO_{X_1}, k[x,y,z])$, we see that $\omega_{X_1}$ admits a presentation with two generators and three relations, the relations being described by the rows of $A$.  If $\beta \colon \bbP \omega \to X$ is the structure morphism, then by construction $\beta^{-1}(X_1)$ contains an open subscheme of the form  $\Spec(\calO_{X_1}[v]/(z + v x^{b_0+1}, y + v z, x^{b_0+2} +v y))$.  Elementary algebra shows 
\begin{equation} \label{Eqn: GoodOpen}
	\calO_{X_1}[v]/(z  + v x^{b_0+1}, y + v z, x^{b_0+2}+v y) = k[x,v]/( x^{b_0+1}(x +v^{3})),
\end{equation}
and so 
\begin{align*}
	W = & \Spec k[x,v, 1/(x +v^{3})]/( x^{b_0+1}(x +v^{3})), \\
	f=& x
\end{align*}
	 satisfy the desired condition.
\end{proof}

The following lemmas  allow us to control the behavior of nilpotence under the Abel map.

\begin{lm} \label{Lemma: NonVanishing}
	If $R$ is a $k$-algebra and $f \in R[t_1, \dots, t_d]$ a nonzero polynomial, then $f(\underline{r}) \in R$ is nonzero for some $\underline{r}=(r_1, \dots, r_d) \in R \times \dots \times R$.
\end{lm}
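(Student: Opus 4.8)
The plan is to reduce the statement to the classical fact that a nonzero polynomial over an infinite field does not vanish identically, using crucially that the base field $k$ is infinite (it is algebraically closed). The point is that although the analogous statement fails over a general base ring — e.g. $t^2 - t$ is a nonzero polynomial over $\mathbb{F}_2$ that vanishes at every point of $\mathbb{F}_2$ — the finitely many coefficients of $f$ span only a finite-dimensional $k$-subspace of $R$, and within that subspace the question becomes a question purely about polynomials over $k$.

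Concretely, I would first dispose of the trivial case $R = 0$ (then $R[t_1, \dots, t_d] = 0$ has no nonzero element, so the hypothesis is vacuous), so assume $R \neq 0$, whence the structure map $k \to R$ is injective and we may regard $k^d \subseteq R^d$. Let $V \subseteq R$ be the $k$-linear span of the (finitely many) coefficients of $f$, and choose a $k$-basis $v_1, \dots, v_n$ of $V$; note $n \ge 1$ since $f \neq 0$. Expressing each coefficient of $f$ in this basis and collecting terms gives $f = \sum_{j=1}^{n} p_j(t_1, \dots, t_d)\, v_j$ with $p_j \in k[t_1, \dots, t_d]$. If every $p_j$ were zero then every coefficient of $f$ would be zero, so $f \neq 0$ forces $p_{j_0} \neq 0$ for some index $j_0$.

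Next I would invoke — and, if one wants to be self-contained, establish by a one-line induction on $d$ — the standard fact that a nonzero polynomial in $k[t_1, \dots, t_d]$ over the infinite field $k$ has a nonvanishing point: there is $\underline{r} = (r_1, \dots, r_d) \in k^d \subseteq R^d$ with $p_{j_0}(\underline{r}) \neq 0$. Evaluating, $f(\underline{r}) = \sum_{j=1}^{n} p_j(\underline{r})\, v_j$ has $p_{j_0}(\underline{r}) \in k \setminus \{0\}$ as its $v_{j_0}$-coefficient, so by the $k$-linear independence of $v_1, \dots, v_n$ in $R$ we conclude $f(\underline{r}) \neq 0$, as desired.

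I do not anticipate a genuine obstacle: the lemma is elementary, and the only thing one must take care about is to actually use that $k$ is infinite — the reduction to a finite-dimensional space of coefficients is precisely the device that makes that hypothesis usable, and without it the assertion is simply false.
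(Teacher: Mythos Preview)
Your proof is correct and follows essentially the same route as the paper: both arguments expand $f$ with respect to a $k$-basis of $R$ (the paper takes a full basis, you take a basis of the finite-dimensional span of the coefficients), pick a nonzero coefficient polynomial in $k[t_1,\dots,t_d]$, and then evaluate at a point of $k^d$ where it does not vanish. The only cosmetic difference is that the paper cites the Nullstellensatz for the base case $R=k$, whereas you invoke directly that $k$ is infinite.
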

\begin{proof}
	When $R=k$, the lemma follows e.g.~from the Hilbert Nullstellensatz.  In general, pick a (possibly infinite) basis $\{ e_i \}_{i \in I}$ for $R$ as a $k$-module and write $f = \sum f_i e_i$ with $f_i \in k[t_1, \dots, t_d]$.  Some $f_i$ is nonzero by assumption.  By the case $R=k$, we have $f_i(\underline{r}) \ne 0$ for some $\underline{r}=(r_1, \dots, r_d) \in k \times \dots k$, and this tuple, considered as an element of $R \times \dots \times R$, satisfies the desired condition.
\end{proof}

\begin{lm} \label{Lemma: RightOrder}
	Let $V$ be a finite type $k$-scheme.  If $V \times_{k} \bbA_{k}^{d}$ satisfies Property~\ref{Eqn: SpecialPropertyB}, then so does $V$.
\end{lm}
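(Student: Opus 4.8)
The plan is to prove this by a substitution argument, with Lemma~\ref{Lemma: NonVanishing} supplying the substitution. Throughout I write $\underline t=(t_1,\dots,t_d)$, so that $\Spec R\times_k\bbA^d_k=\Spec R[\underline t]$ for an affine scheme $\Spec R$. The first point is that Property~\ref{Eqn: SpecialPropertyB} propagates along open immersions in both directions: if a nonempty open $U\subseteq Z$ satisfies it, then the witnessing pair for $U$ also witnesses it for $Z$; and if $Z$ satisfies it via $(W,f)$ and $U\subseteq Z$ is open with $U\cap W\neq\emptyset$, then $(U\cap W,\,f|_{U\cap W})$ witnesses it for $U$, since the support of a section restricts to its intersection with the open set. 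As the projection $V\times_k\bbA^d_k\to V$ is open, I would pick a nonempty affine open $\Spec R\subseteq V$ meeting the image of the given open subset of $V\times_k\bbA^d_k$ and reduce to the case $V=\Spec R$ with $R$ a finitely generated (hence Noetherian) $k$-algebra and $\Spec R[\underline t]$ satisfying Property~\ref{Eqn: SpecialPropertyB}. Shrinking the witnessing open of $\Spec R[\underline t]$ to a basic open $D(h)$, say with witnessing function $f'\in\calO(D(h))$, and then clearing denominators, I would produce $G\in R[\underline t]$ with $G^{b_0+1}=0$ in $R[\underline t]$ and with $G^{b_0}$ differing from $(f')^{b_0}$ by a unit on $D(h)$; hence the support of $G^{b_0}$ in $\Spec R[\underline t]$ is a closed set containing the nonempty open $D(h)$.

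Next I would reduce to the case that $\Spec R$ is irreducible. A closed subset of $\Spec R[\underline t]$ containing a nonempty open contains a minimal prime, and the minimal primes of $R[\underline t]$ are the ideals $\mathfrak p R[\underline t]$ with $\mathfrak p$ minimal in $R$; so $G^{b_0}$ is nonzero in $R_{\mathfrak p_0}[\underline t]$ for some minimal prime $\mathfrak p_0\subseteq R$. Localizing $R$ at an element lying in every minimal prime of $R$ other than $\mathfrak p_0$, I may assume $R$ has a unique minimal prime $\mathfrak n$, so that $\Spec R$ is irreducible with generic point $\mathfrak n$, while still $G^{b_0+1}=0$ in $R[\underline t]$ and $G^{b_0}\neq 0$ in $R_{\mathfrak n}[\underline t]$. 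Over such an $R$ the support requirement in Property~\ref{Eqn: SpecialPropertyB} collapses to nonvanishing at the generic point: if $s\in R$ has nonzero image in $R_{\mathfrak n}$, then $\mathfrak n$ lies in the support of $s$, and since the support of a function is closed and $\Spec R$ is irreducible, that support is all of $\Spec R$.

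Finally I would apply Lemma~\ref{Lemma: NonVanishing} to the $k$-algebra $R_{\mathfrak n}$ and the nonzero polynomial $G^{b_0}\in R_{\mathfrak n}[\underline t]$. The proof of that lemma expands $G^{b_0}$ in a $k$-basis of $R_{\mathfrak n}$ and reduces to the field case, which yields a tuple with coordinates in $k$; so one gets $\underline r=(r_1,\dots,r_d)\in k^d\subseteq R^d$ with $G^{b_0}(\underline r)\neq 0$ in $R_{\mathfrak n}$. Setting $W:=\Spec R$, a nonempty open of $V$, and $f:=G(\underline r)\in R$, one has $f^{b_0+1}=0$ (apply the evaluation homomorphism $R[\underline t]\to R$ to $G^{b_0+1}=0$), while $f^{b_0}=G^{b_0}(\underline r)$ is nonzero in $R_{\mathfrak n}$ and hence has support all of $\Spec R=W$ by the previous paragraph; thus $V$ satisfies Property~\ref{Eqn: SpecialPropertyB}.

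The step I expect to require the most care is the support condition — that is, controlling the nilpotence, not just the non-reducedness. A crude specialization, such as restricting the witnessing section to the zero section $V\times\{0\}\hookrightarrow V\times_k\bbA^d_k$, can send the relevant nilpotent to zero. The argument succeeds because of two features: the reduction to an irreducible base, which turns ``has full support'' into the single requirement ``is nonzero at the generic point,'' and the fact that Lemma~\ref{Lemma: NonVanishing} can be arranged to specialize the $\bbA^d$-coordinates to \emph{constants in $k$}, which is exactly what keeps the specialized section nonzero at that generic point.
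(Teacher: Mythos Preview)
Your proof is correct and follows the paper's overall plan --- reduce to an affine $V=\Spec R$, clear denominators, and invoke Lemma~\ref{Lemma: NonVanishing} to specialize the $\bbA^d$-coordinates --- but the two arguments handle the support condition differently. The paper reduces further to $R$ with \emph{primary} zero ideal (irreducible and no embedded components), applies Lemma~\ref{Lemma: NonVanishing} to the product $g^{k_0}f^{b_0}$ inside $R[\underline t]$ to obtain $\underline r\in R^d$, and then argues via the primary property that $f^{b_0}(\underline r)$ survives in the further localization $R[1/g(\underline r)]$. You instead reduce only to $R$ irreducible, pass to the generic localization $R_{\mathfrak n}$, and exploit the \emph{proof} (not just the statement) of Lemma~\ref{Lemma: NonVanishing} to arrange $\underline r\in k^d$; this lets you evaluate back in $R$ and deduce full support from the trivial observation that a section nonzero at the generic point of an irreducible scheme has dense, hence full, closed support. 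Your route is arguably tidier --- no need to track $g$ or the nilpotence exponent $k_0$, and no need to excise embedded components --- at the price of reaching inside the proof of the auxiliary lemma. One small point worth spelling out: ``clearing denominators'' to get $G^{b_0+1}=0$ in $R[\underline t]$ needs a word of care, since writing $f'=G_0/h^m$ a priori only yields $h^NG_0^{b_0+1}=0$; replacing $G_0$ by $h^NG_0$ fixes this while keeping $G^{b_0}$ a unit multiple of $(f')^{b_0}$ on $D(h)$, as you need.
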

\begin{proof}
	Let $W \subset V \times_{k} \bbA_{k}^{d}$ and $f$ be as in Property~\ref{Eqn: SpecialPropertyB}.  To begin, observe that if $\Spec(R) \subset V$ is an open affine, then $\Spec(R) \times_{k} \bbA_{k}^{d} \subset V \times_{k} \bbA_{k}^{d}$ satisfies Property~\ref{Eqn: SpecialPropertyB}  provided $W \cap \Spec(R) \times_{k} \bbA_{k}^{d}$ is nonempty.  In particular, we can assume that $V=\Spec(R)$ is affine, irreducible, and has no embedded components (i.e.~the zero ideal is primary).  (To see this, observe that the image of $W$ under the projection $V \times_{k} \bbA_{k}^{d} \to V$ is open, so if we choose an irreducible component that meets the image, then we can take $\Spec(R)$ to be a nonempty open in the complement of the union of all embedded components and all irreducible components except  the chosen one.)  Furthermore,  we can assume that $W$ is a basic affine open $W = \Spec R[t_1,\dots, t_d, 1/g] 
	\subset V \times_{k} \bbA_{k}^{d}$ with $g \in R[t_1, \dots, t_d]$, and then by clearing denominators we can assume $f$ is the restriction of an element  in $R[t_1, \dots, t_d]$ that we will also denote by $f$. 
	
	We now construct a tuple $\underline{r}=(r_1, \dots, r_d)$ with the property that  $f^{b_0}(\underline{r})$ has nonzero image  in  $R[1/g(\underline{r})]$. This will prove the lemma.  Pick a large integer $k_0$ so that the  nilradical $\operatorname{Nil} \subset R$ satisfies $\operatorname{Nil}^{k_0}=0$.  Since $f^{b_0}$ has nonzero image in $R[t_1, \dots, t_d, 1/g]$, we must have $g^{k_0} f^{b_0} \ne 0$ in $R[t_1, \dots, t_d]$, so by Lemma~\ref{Lemma: NonVanishing}, there exists $\underline{r} \in R \times \dots \times R$ such that $g^{k_0}(\underline{r}) \cdot f(\underline{r})^{b_0} \ne 0$.  Now consider the function  $f(\underline{r})/1$ on the basic affine open $\Spec R[ 1/g(\underline{r})] \subset \Spec R$.  We certainly have $f^{b_0+1}(\underline{r})/1=0$, and the author claims that we also have $f^{b_0}(\underline{r})/1 \ne 0$.  If the claim failed, then we would have 
$$
	g^{m}(\underline{r}) \cdot f^{b_0}(\underline{r})=0 \text{ in $R$}
$$
for some $m$, but the zero ideal of $R$ is primary (by our preliminary reduction), and $f^{b_0}(\underline{r}) \ne 0$, so we must have that $g^{m}(\underline{r})$ and hence $g(\underline{r})$ is nilpotent.  In particular, $g^{k_0}(\underline{r}) = 0$, but we chose $\underline{r}$ so that this condition does not hold.  A contradiction!  This establishes the claim, and since $f^{b_0}(\underline{r})/1 \in R[1/g(\underline{r})]$ is nonzero, its support  must equal $\Spec(R[1/g(\underline{r})])$, so $f(\underline{r})$ satisfies the desired conditions.
\end{proof}

We now prove the main theorem.
\begin{tm} \label{Thm: MainTheorem}
	$\overline{J}(X)$ satisfies Property~\ref{Eqn: SpecialPropertyB}.
\end{tm}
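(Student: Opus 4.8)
The plan is to transport Property~\ref{Eqn: SpecialPropertyB} from $\bbP\omega$ to $\overline{J}(X)$ along a suitable Abel map, using Lemma~\ref{Lemma: RightOrder} to strip off the auxiliary affine-space factors that appear because $\bbP\omega$ and the relevant Quot scheme differ only by a projective-bundle-type construction. First I would recall that, in the conventions fixed above, the degree $d=1$ Quot scheme $X^{[1]}=\operatorname{Quot}^1_\omega(X)$ parameterizes length-$1$ quotients of $\omega$, and that there is a standard identification (or at least a smooth surjection with affine-space fibres, locally) between $X^{[1]}$ and the projectivization $\bbP\omega$: a length-$1$ quotient of $\omega$ at a point $x\in X$ is the same as a line in the fibre $\omega\otimes k(x)$ — i.e.\ a point of $\bbP\omega$ — once one works with $\bbP\omega=\operatorname{Proj}\calO_X[\omega]$ as above. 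Concretely, from Lemma~\ref{Lemma: MainLemma} we have an explicit open $W\subset\bbP\omega$ inside $\beta^{-1}(x_0)$ with the function $f=x$ satisfying $f^{b_0+1}=0$, $\operatorname{Supp}(f^{b_0})=W$. So the first step is: Property~\ref{Eqn: SpecialPropertyB} holds for $X^{[1]}$ (possibly after crossing with some $\bbA^d_k$), because $X^{[1]}\to\bbP\omega$, or $\bbP\omega\to X^{[1]}$, is smooth with affine-space fibres over the locus we care about, and Property~\ref{Eqn: SpecialPropertyB} is insensitive to such modifications — pulling back along a smooth affine-space-bundle map preserves a function with the required nilpotence, and Lemma~\ref{Lemma: RightOrder} lets us push it back down.

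Next I would bring in the Abel map. By the conventions, the Abel map $X^{[1]}\to\overline{J}^{2g-3}(X)$ sends a surjection $\omega\twoheadrightarrow Q$ with $Q$ of length $1$ to its kernel, a rank-$1$ torsion-free sheaf. The key structural input is that this Abel map is, at least over a neighbourhood of the image of $W$, smooth with fibres that are affine spaces (or projective spaces, but the open subsets we produce avoid the troublesome directions). The point is that two surjections with the same kernel differ by an automorphism of $\omega$, which near a point corresponds to twisting by units, and the fibre of the Abel map over a sheaf $I\subset\omega$ is (an open subset of) $\bbP\operatorname{Hom}(I,?)$-type data; in the cases relevant here it is an affine space $\bbA^d_k$ of some dimension $d$ depending on $h^0$ of a twist. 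So the second step is: the open $W\subset X^{[1]}$ maps to an open $W'\subset\overline{J}(X)$ with $W\to W'$ (after possibly shrinking) smooth and an $\bbA^d_k$-bundle, in fact Zariski-locally a product $W'\times_k\bbA^d_k$.

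Granting that, the third step is purely formal: we have $W'\times_k\bbA^d_k$ (or an open of it) satisfying Property~\ref{Eqn: SpecialPropertyB}, hence by Lemma~\ref{Lemma: RightOrder} (applied with $V$ an appropriate finite-type $k$-scheme whose base change by $\bbA^d_k$ contains the nonreduced open) the scheme $\overline{J}(X)$ itself satisfies Property~\ref{Eqn: SpecialPropertyB}. One has to be a little careful that Lemma~\ref{Lemma: RightOrder} is stated for an actual product $V\times_k\bbA^d_k$, so I would either (i) choose the open $W'\subset\overline{J}(X)$ small enough that the $\bbA^d_k$-bundle $W\to W'$ trivializes, making it a literal product, or (ii) observe that the proof of Lemma~\ref{Lemma: RightOrder} is local on $V$ and so applies to any smooth morphism with affine-space fibres, which is exactly the "nonreducedness descends down smooth morphisms" remark made in the discussion above (strengthened to track the exponent $b_0$). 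I expect the main obstacle to be making the second step precise: identifying the fibres of the Abel map $X^{[1]}\to\overline{J}(X)$ near $W$ as affine spaces and checking the map is smooth there, rather than merely flat or merely dominant. This requires understanding $\operatorname{Hom}$ and $\operatorname{Ext}^1$ of the relevant rank-$1$ torsion-free sheaves on the non-Gorenstein curve $X$ — essentially a deformation-theoretic computation with the explicit resolution \eqref{Eqn: Resolution} — whereas steps one and three are bookkeeping with the already-proven lemmas.
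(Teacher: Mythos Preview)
Your second step fails, and this is not a matter of a missing computation but a dimension obstruction. The Abel map $X^{[1]}\to\overline{J}^{2g-3}(X)$ goes from a scheme of dimension $1$ to one of dimension $g=2b_0+2\ge 4$, so its image is a proper closed subscheme, never an open $W'\subset\overline{J}(X)$. Knowing that a closed subscheme of $\overline{J}(X)$ satisfies Property~\ref{Eqn: SpecialPropertyB} says nothing about $\overline{J}(X)$ itself, and no amount of $\operatorname{Hom}/\operatorname{Ext}^1$ calculation with the resolution~\eqref{Eqn: Resolution} will make this map smooth onto an open set. (Your description of the fibre is also off: the fibre of the Abel map over $[I]$ is $\bbP\operatorname{Hom}(I,\omega)$, not something whose dimension is governed by automorphisms of $\omega$.)

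The paper circumvents this by inserting an intermediate step you are missing: it passes from $X^{[1]}$ to $X^{[d]}$ for $d\gg 0$ \emph{before} applying the Abel map. Concretely, if $V\subset X^{[d-1]}$ is the open locus of quotients supported on the smooth locus $X\setminus\{x_0\}$, then ``adding points'' gives an open immersion $V\times_k W\hookrightarrow X^{[d]}$ (checked by showing it is universally injective and formally \'etale), and the pullback of $f$ along the projection to $W$ witnesses Property~\ref{Eqn: SpecialPropertyB} for $X^{[d]}$. Only now does the Abel map enter: for $d$ sufficiently large, Altman--Kleiman \cite[Theorem~(8.4(v))]{altman80} guarantees that $X^{[d]}\to\overline{J}(X)$ is a Zariski-locally trivial $\bbP^{d-g}_k$-bundle, so locally it is a projection $U\times_k\bbA^{d-g}_k\to U$, and Lemma~\ref{Lemma: RightOrder} applies exactly as you outline in your third step. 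The essential idea you are missing is that one must boost the degree to reach the range where the Abel map has the projective-bundle structure; the $d=1$ Abel map does not.
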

\begin{proof}
We prove the theorem using Lemma~\ref{Lemma: MainLemma} together with  Lemma~\ref{Lemma: RightOrder} and the Abel map.  We begin by showing that $X^{[d]}$  satisfies Property~\ref{Eqn: SpecialPropertyB} for all $d \ge 1$.  For $d=1$, this is  Lemma~\ref{Lemma: MainLemma} as $X^{[1]} = \bbP \omega$.  The identification $X^{[1]} = \bbP \omega$ is obtained by using adjunction to identify the relevant functors.  More formally, let $\beta \colon \bbP \omega \to X$ be the structure morphism, $Q_{\text{taut}}$ the tautological line bundle (that restricts to $\calO(1)$ on every fiber of $\beta$), $q_{\text{taut}} \colon \beta^{*} \omega \to Q_{\text{taut}}$ the tautological surjection, and $p_1 \colon X \times_{k} \bbP \omega \to X$ the projection morphism.  The identity homomorphism  $(\beta \times 1)^{* } p_{1}^{*}\omega = \beta^{*} \omega \to \beta^{*} \omega$ is adjoint to a homomorphism $p_{1}^{*}\omega \to (\beta \times 1)_{*} \beta^{*} \omega$, and the composition of this adjoint with $(\beta \times 1)_{*} q_{\text{taut}}$ defines a morphism $\bbP \omega \to X^{[1]}$ that is the desired isomorphism.  To see this morphism is an isomorphism, observe that the tautological quotient on $X \times_{k} X^{[1]}$ must be of the form $(\operatorname{HC} \times 1)_{*}Q$ for $Q$ some coherent module on $X^{[1]}$ and $\operatorname{HC} \colon X^{[1]} \to X$ the analogue of the Hilbert--Chow morphism, and the associated adjoint homomorphism defines the inverse morphism $X^{[1]} \to \bbP \omega$.  Under the identification $\bbP \omega \cong X^{[1]}$, the open subset $W$  from Lemma~\ref{Lemma: MainLemma} is identified with an open subset contained in the locus of quotients supported on the singularity $x_0$.

When $d >1$, $X^{[d]}$ contains an open subscheme isomorphic to $V \times_{k} W$ for $W \subset X^{[1]}$ the open subset   from Lemma~\ref{Lemma: MainLemma}  and  $V \subset X^{[d-1]}$ the open locus parameterizing quotients supported on the smooth locus $X-\{x_0\}$, and $V \times_{k} W$  is nonreduced.  In fact, the pullback of the function $f \in H^{0}(W, \calO_{X^{[1]}})$ under projection satisfies Property~\ref{Eqn: SpecialPropertyB}.  To see all this, consider the morphism $V \times_{k} W \to X^{[d]}$ that is defined by setting, for a given $k$-scheme $T$,
\begin{equation} \label{Eqn: FunctorOfPoints}
	V(T) \times W(T) \to X^{[d]}(T)
\end{equation}
equal to the function that sends a pair $(q_1, q_2)$ to the product homomorphism $q := q_1 \times q_2 \colon \omega_{T} \to Q := Q_1 \times Q_{2}$.  We prove that $V\times_{k} W \to X^{[d]}$ is an open immersion by showing that it is universally injective and formally \'{e}tale.  The function \eqref{Eqn: FunctorOfPoints} is  injective because given $q$ we can recover $q_2$ as the composition of $q$ with the localization homomorphism $Q \to Q \otimes \calO_{X_{T}, (x_0)_{T}}$ (as $Q_{2}$ is supported at $(x_0)_{T}$)  and similarly with $q_1$.  In particular, $V \times_{k} W \to X^{[d]}$ is universally injective and formally unramified.  The morphism is also formally smooth: given a closed immersion of affine schemes $T \to \widetilde{T}$ that is defined by a nilpotent ideal, an element $(q_1, q_2) \in V(T) \times W(T)$ with image $q \in X^{[d]}(T)$, and an element $\widetilde{q} \in X^{[d]}(\widetilde{T})$  lifting $q$, consider the homomorphism $\widetilde{q}_2$ that is the composition of $\widetilde{q}$ with the localization homomorphism $\widetilde{Q} \to \widetilde{Q} \otimes \calO_{X_{\widetilde{T}}, (x_0)_{\widetilde{T}}}$.  The homomorphism $\widetilde{q}_2$ is an element of $X^{[1]}(\widetilde{T})$ because its image is $\widetilde{T}$-flat and has length $1$ fibers ($Q_{2}$ and $\widetilde{Q}_{2}$ have the same fibers because  $T \to \widetilde{T}$ is bijective).  Because $\widetilde{q}_2$ lifts $q_2$, $\widetilde{q}_2 \in W(\widetilde{T})$ (as the morphism $W \to X^{[1]}$ is an open immersion).  Defining $\widetilde{q}_1 \in V(\widetilde{T})$ analogously, we have constructed  a pair $(\widetilde{q}_1, \widetilde{q}_2) \in W(\widetilde{T}) \times V(\widetilde{T})$ that lifts $(q_1, q_2)$ and maps to $\widetilde{q}$, proving that $V \times_{k} W \to X^{[d]}$ is formally smooth and thus is an open immersion.  The conditions of Property~\ref{Eqn: SpecialPropertyB} are satisfied by the pullback $f \otimes 1 \in H^{0}(V \times_{k} W, \calO_{V \times_{k} W})$ of the function from Lemma~\ref{Lemma: MainLemma} under the projection $V \times_{k} W \to W$ (since $(f \otimes 1)^{b_0} \ne 0$ by the faithful flatness of $V \times_{k} W \to W$).

The proof is now complete.  Once $d$ is sufficiently large, the Abel map $X^{[d]} \to \overline{J}(X)$ is a (Zariski-locally trivial) $\bbP_{k}^{d-g}$-bundle \cite[Theorem~(8.4(v))]{altman80}.  In particular, we can find covers by open subsets $U \subset \overline{J}^{d}(X)$ and $U \times \bbA_{k}^{d-g} \cong V \subset X^{[d]}$ with the property that the restriction of the Abel map to $V$ is the projection $V=U \times \bbA_{k}^{d-g} \to U$, and so the compactified jacobian satisfies Property~\ref{Eqn: SpecialPropertyB} by Lemma~\ref{Lemma: RightOrder}.
\end{proof}

\begin{rmk}
	We can make Theorem~\ref{Thm: MainTheorem} more explicit. The proof shows that $\overline{J}(X)$ is nonreduced at the kernel of a surjection $\omega \to k(x_0)$ from the dualizing sheaf to the skyscraper sheaf $k(x_0)$ supported at the singularity $x_0 \in X$.  For example, define $\eta_1$ and $\eta_2$ to be the two generators of $\omega_{X_1}$ from Lemma~\ref{Lemma: MainLemma}.  Then $\overline{J}(X)$ is nonreduced at the submodule of $\omega$ generated by $\eta_1, x \eta_2$ on $X_1$ and equal to $\omega$ away from $x_0 \in  X$.
\end{rmk}

\section*{Acknowledgments}
The author would like to thank the anonymous referee, Daniel Erman and Ravi Vakil for helpful feedback, Klaus Hulek and Junecue Suh for answering questions about the literature on moduli spaces, and  Louisa McClintock for helpful comments on exposition. 

This work was completed while the author was a Wissenschaftlicher Mitarbeiter at the Institut f\"{u}r Algebraische Geometrie, Leibniz Universit\"{a}t Hannover. During that time, the author was supported by an AMS-Simons Travel Grant.
}

\providecommand{\bysame}{\leavevmode\hbox to3em{\hrulefill}\thinspace}
\providecommand{\MR}{\relax\ifhmode\unskip\space\fi MR }

\providecommand{\MRhref}[2]{%
  \href{http://www.ams.org/mathscinet-getitem?mr=#1}{#2}
}
\providecommand{\href}[2]{#2}

\end{document}